\documentclass[a4paper,twoside,fleqn,notitlepage]{article}%
\usepackage{amsmath}
\usepackage{amsfonts}
\usepackage{amssymb}
\usepackage{graphicx}%
\setcounter{MaxMatrixCols}{30}
\providecommand{\U}[1]{\protect\rule{.1in}{.1in}}
\setlength{\topmargin}{0.in} \setlength{\textwidth}{16.5cm}
\setlength{\textheight}{22.00cm} \setlength{\oddsidemargin}{0.5cm}
\setlength{\evensidemargin}{0.5cm}
\newtheorem{theorem}{Theorem}

\newtheorem{definition}[theorem]{Definition}
\newtheorem{example}[theorem]{Example}

\newtheorem{lemma}[theorem]{Lemma}

\newtheorem{proposition}[theorem]{Proposition}

\newenvironment{proof}[1][Proof]{\noindent\textbf{#1.} }{\ \rule{0.5em}{0.5em}}
\begin{document}

\title{Approximation properties of $q$-Bernoulli polynomials}
\author{M. Momenzadeh and I. Y. Kakangi\\Near East University\\Lefkosa, TRNC, Mersiin 10, Turkey \\Email: mohammad.momenzadeh@neu.edu.tr\\\ \ \ \ \ \ \ \ \ yusufkakangi@gmail.com}
\maketitle

\begin{abstract}
We study the $q-$analogue of Euler-Maclaurin formula and by introducing a new
$q$-operator we drive to this form. Moreover, approximation properties of
$q$-Bernoulli polynomials is discussed. We estimate the suitable functions as
a combination of truncated series of $q$-Bernoulli polynomials and the error
is calculated. This paper can be helpful in two different branches, first
solving differential equations by estimating functions and second we may apply
these techniques for operator theory.

\end{abstract}

\section{Introduction}

The \ present study has sough to investigate the approximation of suitable
function $f(x)$ as a linear combination of $q-$Bernoulli polynomials. This
study by using $q-$operators through a parallel way has achived to the kind of
Euler expansion for $f(x)$, and expanded the function in terms of
$q-$Bernoulli polynomials. This expansion offers a proper tool to solve
$q$-difference equations or normal differential equation as well. There are
many approaches to approximate the capable functions. According to properties
of $q$-functions many $q$-function have been used in order to approximate a
suitable function.For example, at \cite{haha}, some identities and formulae
for the q-Bernstein basis function, including the partition of unity property,
formulae for representing the monomials were studied. In addition, a kind of
approximation of a function in terms of Bernoulli polynomials are used in
several approaches for solving differential equations, such as [7, 20, 21].
This paper gives conditions to approximate capable functions as a linear
combination of $q$-Bernoulli polynomials as well as related examples. Also,
introduces the new $q-$operator to reach the $q-$analogue of Euler-Maclaurin formula.

This study, first, introduces some $q$-calculus concepts. There are several
types of $q-$Bernoulli polynomials and numbers that can be generated by
different $q-$exponential functions. Carlitz \cite{carlitz} is pionier of
introducing $q$-analogue of the Bernoulli numbers, he applied a sequence
$\left\{  \beta_{m}\right\}  _{m\geq0}$ in the middle of the 20$^{th}$
century;%
\begin{equation}
\sum_{k=0}^{m}\left(
\begin{array}
[c]{c}%
m\\
k
\end{array}
\right)  \beta_{k}q^{k+1}-\beta_{m}=\left\{
\begin{tabular}
[c]{ll}%
$1,$ & $m=1,$\\
$0,$ & $m>1.$%
\end{tabular}
\ \ \ \ \ \ \ \right.  \label{2}%
\end{equation}

First, this study makes an assumption that $\left\vert q\right\vert <1,$ and
this assumption is going to apply in the rest of the paper as well. If  $q$
tends to one from a left side the ordinary form would be reached.

Since Carlitz, there have been many distinct $q$-analogue of Bernoulli numbers
arising from varying motivations. In \cite{nazim}, they used improved
$q-$exponential functions to introduce a new class of $q-$Bernoulli
polynomials. In addition, they investigate some properties of these
$q$-Bernoulli polynomials.

Let us introduce a class of $q-$Bernoulli polynomials in a form of generating
function as follows;%

\begin{equation}
\frac{ze_{q}(zt)}{e_{q}(z)-1}=\sum_{n=0}^{\infty}\mathit{\beta}_{n,q}%
(t)\frac{z^{n}}{\left[  n\right]  _{q}!},\text{ \ \ \ \ \ \ \ \ \ }\left\vert
z\right\vert <2\pi. \label{55}%
\end{equation}

where $[n]_{q}$ is $q-$number and $q$-numbers factorial is defined by;%
\[
\left[  a\right]  _{q}=\frac{1-q^{a}}{1-q}\ \ \ \left(  q\neq1\right)
;\ \ \ \left[  0\right]  _{q}!=1;\ \ \ \ \left[  n\right]  _{q}!=\left[
n\right]  _{q}\left[  n-1\right]  _{q}\ \ \ \ n\in%
\mathbb{N}
,\ \ a\in%
\mathbb{C}
.
\]

The $q$-shifted factorial and $q$-polynomial coefficient are defined by the
following expressions respectively;
\begin{align*}
\left(  a;q\right)  _{0} &  =1,\ \ \ \left(  a;q\right)  _{n}=%
{\displaystyle\prod\limits_{j=0}^{n-1}}
\left(  1-q^{j}a\right)  ,\ \ \ n\in%
\mathbb{N}
,\\
\left(  a;q\right)  _{\infty} &  =%
{\displaystyle\prod\limits_{j=0}^{\infty}}
\left(  1-q^{j}a\right)  ,\ \ \ \ \left\vert q\right\vert <1,\ \ a\in%
\mathbb{C}
.
\end{align*}%
\[
\left(
\begin{array}
[c]{c}%
n\\
k
\end{array}
\right)  _{q}=\frac{\left(  q;q\right)  _{n}}{\left(  q;q\right)
_{n-k}\left(  q;q\right)  _{k}},
\]
$q$-standard terminology and notation can be found at \cite{kac} and
\cite{ernst}. We call $\mathit{\beta}_{n,q}$ Bernoulli number and
$\mathit{\beta}_{n,q}=\mathit{\beta}_{n,q}(0)$. In addition, $q$-analogue of
$\left(  x-a\right)  ^{n}$ is defined as;

\bigskip%
\[
\left(  x-a\right)  _{q}^{n}=\left\{
\begin{tabular}
[c]{ll}%
$1,$ & $n=0,$\\
$(x-a)(x-aq)...(x-aq^{n-1}),$ & $n\neq0.$%
\end{tabular}
\ \ \ \ \ \right.  ,
\]

In the standard approach to the $q-$calculus, two exponential function are
used.These $q-$exponentials are defined by;%

\begin{align*}
e_{q}\left(  z\right)   &  =\sum_{n=0}^{\infty}\frac{z^{n}}{\left[  n\right]
_{q}!}=\prod_{k=0}^{\infty}\frac{1}{\left(  1-\left(  1-q\right)
q^{k}z\right)  },\ \ \ 0<\left\vert q\right\vert <1,\ \left\vert z\right\vert
<\frac{1}{\left\vert 1-q\right\vert },\ \ \ \\
\ \ E_{q}(z)  &  =e_{1/q}\left(  z\right)  =\sum_{n=0}^{\infty}\frac
{q^{\frac{1}{2}n\left(  n-1\right)  }z^{n}}{\left[  n\right]  _{q}!}%
=\prod_{k=0}^{\infty}\left(  1+\left(  1-q\right)  q^{k}z\right)
,\ \ \ \ \ \ \ 0<\left\vert q\right\vert <1,\ z\in%
\mathbb{C}
,
\end{align*}

\bigskip$q-$shifted factorial can be expressed by Heine's binomial formula as follow;%

\[
\left(  a;q\right)  _{n}=\sum_{k=0}^{n}\left(
\begin{array}
[c]{c}%
n\\
k
\end{array}
\right)  _{q}q^{\frac{k(k-1)}{2}}\left(  -1\right)  ^{k}a^{k}%
\]

Let for some $0\leq\alpha<1,$ the function $|f(x)x^{\alpha}|$ is bounded on
the interval $\left(  0,A\right]  $, then Jakson integral defines as
\cite{kac};%

\[
\int\mathit{f}(x)d_{q}x=(1-q)x\sum_{i=0}^{\infty}q^{i}f(q^{i}x)
\]

Above expression converges to a function $F(x)$ on $\left(  0,A\right]  ,$
which is a $q-$antiderivative of $f(x)$. Suppose $0<a<b$, the definite
$q-$integral is defined as;%

\begin{align*}
\int\limits_{0}^{b}\mathit{f}(x)d_{q}x  &  =(1-q)b\sum_{i=0}^{\infty}%
q^{i}f(q^{i}b)\\
\int\limits_{a}^{b}\mathit{f}(x)d_{q}x  &  =\int\limits_{0}^{b}\mathit{f}%
(x)d_{q}x-\int\limits_{0}^{a}\mathit{f}(x)d_{q}x
\end{align*}

We need to apply some properties of the $q$-Bernoulli polynomials to prepare
the approximation conditions. These properties are listed below as a lemma;

\begin{lemma}
Following statements holds true:%

\begin{align}
a)\text{ }D_{q}\left(  \mathit{\beta}_{n,q}(t)\right)   &  =\left[  n\right]
_{q}\mathit{\beta}_{n-1,q}(t),\label{33}\\
b)\mathit{\beta}_{n,q}(t)  &  =\sum_{k=0}^{n}\binom{n}{k}_{q}\mathit{\beta
}_{k,q}t^{n-k},\\
c)\sum_{k=0}^{n}\binom{n}{k}_{q}\frac{\mathit{\beta}_{k,q}}{\left[
n-k+1\right]  _{q}}  &  =\delta_{0,n}\text{ where }\delta\text{ is kronecker
delta}\\
d)\int\limits_{0}^{1}\mathit{\beta}_{n,q}(t)d_{q}t  &  =0\text{ \ \ \ }n\neq0.
\end{align}

\end{lemma}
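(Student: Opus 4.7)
My plan is to derive all four statements directly from the generating function \eqref{55} by applying the $q$-derivative, the $q$-Cauchy product, and $q$-integration to both sides, in that order.

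For part (a), I would apply the $q$-derivative $D_q$ in the variable $t$ to both sides of \eqref{55}. Since $D_{q,t}\bigl(e_q(zt)\bigr)=z\,e_q(zt)$, the left-hand side becomes $\dfrac{z^{2}e_q(zt)}{e_q(z)-1}=\sum_{n\ge 0}\beta_{n,q}(t)\,\dfrac{z^{n+1}}{[n]_q!}$, while the right-hand side becomes $\sum_{n\ge 0}D_q\beta_{n,q}(t)\,\dfrac{z^{n}}{[n]_q!}$. Reindexing and using $[n+1]_q!/[n]_q!=[n+1]_q$, matching coefficients of $z^{n+1}$ gives (a). For part (b), I would split the generating function as the product of $\dfrac{z}{e_q(z)-1}=\sum_k \beta_{k,q}\dfrac{z^k}{[k]_q!}$ and $e_q(zt)=\sum_m \dfrac{t^m z^m}{[m]_q!}$; the $q$-Cauchy product (which follows immediately from $\binom{n}{k}_q=\dfrac{[n]_q!}{[k]_q![n-k]_q!}$) yields $\beta_{n,q}(t)=\sum_{k=0}^n \binom{n}{k}_q \beta_{k,q}\,t^{n-k}$.

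For part (d), which is more conceptual than (c), I would $q$-integrate the generating function in $t$ from $0$ to $1$. The identity $D_{q,t}\bigl(e_q(zt)/z\bigr)=e_q(zt)$ together with the fundamental theorem of $q$-calculus gives $\int_{0}^{1}e_q(zt)\,d_q t=\dfrac{e_q(z)-1}{z}$, so
\[
\int_{0}^{1}\frac{z\,e_q(zt)}{e_q(z)-1}\,d_q t \;=\; 1 \;=\;\sum_{n=0}^{\infty}\left(\int_{0}^{1}\beta_{n,q}(t)\,d_q t\right)\frac{z^{n}}{[n]_q!},
\]
and comparing coefficients delivers (d). Finally, for part (c), I would combine (b) and (d): integrating the polynomial identity in (b) term-by-term and using the elementary formula $\int_{0}^{1}t^{m}\,d_q t=\dfrac{1}{[m+1]_q}$ transforms the left-hand side of (c) into $\int_{0}^{1}\beta_{n,q}(t)\,d_q t$, which equals $\delta_{0,n}$ by (d) (the $n=0$ case handled by $\beta_{0,q}=1$).

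The only non-routine step is ensuring the $q$-Cauchy product in (b), and the $q$-integral computation in (d) go through cleanly; both rest on the standard fact $D_{q,t}e_q(zt)=z\,e_q(zt)$, which is where the symmetry between differentiation and integration in this $q$-setting is crucial. Once (a), (b), (d) are in hand, (c) follows essentially for free, so I expect no substantive obstacle beyond bookkeeping with the $q$-factorials.
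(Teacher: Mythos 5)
Your proposal is correct, and parts (a) and (b) coincide with the paper's argument ($q$-differentiating the generating function, then a $q$-Cauchy product). Where you genuinely diverge is in the logical order of (c) and (d): the paper first obtains (c) as a purely algebraic recurrence coming from the coefficient identity in $\frac{z}{e_q(z)-1}\cdot\frac{e_q(z)-1}{z}=1$ (this is what its terse ``change boundary of summation'' step amounts to), and only then deduces (d) by Jackson-integrating the explicit polynomial form of $\beta_{n,q}(t)$ from (b) term by term and invoking (c). You reverse the dependency: you prove (d) directly and elegantly by $q$-integrating the generating function itself, using $\int_0^1 e_q(zt)\,d_qt=\frac{e_q(z)-1}{z}$, and then (c) drops out as a corollary of (b) and (d) via $\int_0^1 t^m\,d_qt=\frac{1}{[m+1]_q}$. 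Both routes are valid and use the same total set of facts, just factored differently. Your route makes (d) a one-line consequence of the generating function and renders (c) essentially free; the paper's route has the advantage that (c) is established as a standalone algebraic recurrence for the numbers $\beta_{n,q}$ (the one actually used to tabulate them after the lemma) without any appeal to $q$-integration. The only step you should make explicit in a final write-up is the interchange of the Jackson integral with the infinite sum over $n$ when integrating the generating function, though the paper is equally silent on this point.
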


\begin{proof}
\bigskip taking $q-$derivative from both side of $\left(  2\right)  $ to prove
part (a). If we apply Cauchy product for generating formula $(4)$, it leads to
(b). Put $x=0$ at part (b) and change boundary of summation to reach (c). If
we take Jackson integral directly from $\mathit{\beta}_{n,q}(t)$ with the aid
of part (c), we can reach (d).
\end{proof}

\bigskip A few numbers of these polynomials and related $q-$Bernoulli numbers
can be expressed as follows

$%
\begin{array}
[c]{cc}%
\mathit{\beta}_{0,q}=1 & \mathit{\beta}_{0,q}(t)=1\\
\mathit{\beta}_{1,q}=-\frac{1}{[2]_{q}} & \mathit{\beta}_{1,q}(t)=t-\frac
{1}{[2]_{q}}\\
\mathit{\beta}_{2,q}=\frac{q^{2}}{[3]_{q}!} & \mathit{\beta}_{2,q}%
(t)=t^{2}-t+\frac{q^{2}}{[3]_{q}!}\\
\mathit{\beta}_{3,q}=\text{ }\frac{q^{3}(1-q)}{[5]_{q}+[4]_{q}-1} &
\mathit{\beta}_{3,q}(t)=t^{3}-\frac{[3]_{q}}{[2]_{q}}t^{2}+\frac{q^{2}%
}{[2]_{q}}t+\frac{q^{3}(1-q)}{[5]_{q}+[4]_{q}-1}%
\end{array}
$

\begin{definition}
$q-$Bernoulli polynomials of two variables are defined as a generating
function as follow%

\begin{equation}
\frac{te_{q}(xt)e_{q}(yt)}{e_{q}(t)-1}=\sum_{n=0}^{\infty}\mathit{\beta}%
_{n,q}(x,y)\frac{t^{n}}{\left[  n\right]  _{q}!},\text{ \ \ \ \ \ \ \ \ \ }%
\left\vert t\right\vert <2\pi.
\end{equation}

A simple calculation of generating function leads us to%

\begin{equation}
te_{q}(xt)+\frac{t}{e_{q}(t)-1}e_{q}(tx)=\frac{te_{q}(xt)}{e_{q}(t)-1}%
e_{q}(t)=\sum_{n=0}^{\infty}\mathit{\beta}_{n,q}(x,1)\frac{t^{n}}{\left[
n\right]  _{q}!},\text{ \ \ \ \ \ \ \ \ \ }\left\vert t\right\vert <2\pi.
\end{equation}

The LHS can be written as%

\begin{equation}
\sum_{n=0}^{\infty}\frac{t^{n+1}x^{n}}{\left[  n\right]  _{q}!}+\sum
_{n=0}^{\infty}\mathit{\beta}_{n,q}(x)\frac{t^{n}}{\left[  n\right]  _{q}%
!}\text{ \ \ \ \ \ \ \ \ }%
\end{equation}

By comparing the n$^{th}$ coefficients, we drive to difference equations as follow%

\begin{equation}
\mathit{\beta}_{n,q}(x,1)-\mathit{\beta}_{n,q}(x)=\left[  n\right]
_{q}!x^{n-1}\text{ \ \ \ \ \ \ \ \ \ }n\geq1.
\end{equation}

If we put $x=0,$ then $\mathit{\beta}_{n,q}(0,1)-\mathit{\beta}_{n,q}(0)=$
$\delta_{n,1}.$
\end{definition}

\section{\bigskip q-analogue of Euler-Maclaurin formula}

This section introduces $q-$operator to find $q-$analogue of Euler-Maclaurin
formula. The $q-$analogue of Euler-Maclaurin formula has been studied in
\cite{holhol}. They applied $q$-integral by parts to reach their formula. We
can not apply that approach to approximation, because it was written in term
of $p(x)=\mathit{\beta}_{n,q}(x-\left[  x\right]  )$. Moreover, the errors has
not been studied and our $q-$operator which is totally new, leads us to the
more applicable function. That is why, this study situate $q-$Taylor theorem
first\cite{ernst}

\begin{theorem}
If the function $f(x)$ is capable of expansion as a convergent power series
and if $q\neq$ root of unity, then
\begin{equation}
f(x)=\sum_{n=0}^{\infty}\frac{\left(  x-a\right)  _{q}^{n}}{\left[  n\right]
_{q}!}\left(  D_{q}^{n}f\right)  (a)\
\end{equation}

where $D_{q}(f(x))=\frac{f(xq)-f(x)}{x(q-1)}$ is $q-$derivative of $f(x),$ for
$x\neq0,$ \ we can define it at $x=0$ as a normal derivative.
\end{theorem}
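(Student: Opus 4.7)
The plan is to reduce the theorem to the polynomial case by exploiting the basis structure, then extend to convergent power series by rearranging the double sum.

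First I would establish the key differentiation identity $D_q\bigl((x-a)_q^n\bigr) = [n]_q (x-a)_q^{n-1}$. This is a direct computation: using the factorization $(x-a)_q^n = (x-aq^{n-1})(x-a)_q^{n-1}$ and the expansion
\[
(qx-a)_q^n = q^{n-1}(qx-a)(x-a)(x-aq)\cdots(x-aq^{n-2}),
\]
the difference quotient $\bigl((qx-a)_q^n - (x-a)_q^n\bigr)/(x(q-1))$ telescopes in the common factors, and the leading bracket simplifies to $[n]_q$. Iterating gives $D_q^k\bigl((x-a)_q^n\bigr) = \tfrac{[n]_q!}{[n-k]_q!}(x-a)_q^{n-k}$ for $k\le n$ and zero for $k>n$.

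Next I would evaluate at $x=a$. Since $(a-a)_q^m = 0$ as soon as $m\ge 1$ (the first factor vanishes), we obtain the ``orthogonality relation''
\[
\bigl(D_q^k(x-a)_q^n\bigr)\bigl|_{x=a} = [n]_q!\,\delta_{k,n}.
\]
Because each $(x-a)_q^n$ has exact degree $n$ in $x$, the family $\{(x-a)_q^n\}_{n\ge 0}$ is a basis for $\mathbb{C}[x]$. Thus any polynomial $P$ has a unique expansion $P(x) = \sum_n c_n (x-a)_q^n$; applying $D_q^k$ and evaluating at $a$ forces $c_k = (D_q^k P)(a)/[k]_q!$, which proves the theorem for polynomials.

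For the general case, write $f(x)=\sum_{m\ge 0} a_m x^m$ on its disc of convergence. Applying the polynomial result to $x^m$ yields the finite identity
\[
x^m = \sum_{n=0}^{m}\binom{m}{n}_{q} a^{m-n}(x-a)_q^n,
\]
(since $D_q^n(x^m)|_{x=a} = \tfrac{[m]_q!}{[m-n]_q!}a^{m-n}$). Substituting into the series for $f$ and interchanging the order of summation gives the desired formula, with the coefficient of $(x-a)_q^n$ recognized as $(D_q^n f)(a)/[n]_q!$ by applying $D_q^n$ termwise to the original power series.

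The main obstacle is justifying that interchange: one must show that the double sum $\sum_m a_m \sum_{n\le m}\binom{m}{n}_q a^{m-n}(x-a)_q^n$ converges absolutely in a neighborhood of $a$. I would bound $|(x-a)_q^n| \le \prod_{k=0}^{n-1}(|x-a| + |a|(1-|q|^k))$ and use the $q$-binomial identity together with absolute convergence of $\sum_m a_m x^m$ at a point strictly inside the disc to dominate the rearranged series, invoking Fubini--Tonelli to complete the swap. Once this analytic step is in hand, the formula follows immediately from the polynomial case applied term by term.
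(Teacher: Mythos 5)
The paper offers no proof of this theorem at all --- it is imported from the literature via the citation to Ernst that accompanies the statement --- so there is nothing in the paper to measure your argument against, and I judge it on its own. Your polynomial case is the standard textbook proof (essentially Kac--Cheung's): the telescoping computation giving $D_q\bigl((x-a)_q^n\bigr)=[n]_q(x-a)_q^{n-1}$ is correct, the evaluation $\bigl.D_q^k(x-a)_q^n\bigr|_{x=a}=[n]_q!\,\delta_{k,n}$ follows because the first factor of $(a-a)_q^m$ vanishes, and the basis argument then pins down the coefficients; the hypothesis that $q$ is not a root of unity is exactly what makes $[k]_q!\neq0$ so that you may divide. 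The extension to power series is also the right idea, and the Fubini step you identify as the main obstacle can indeed be closed, but for a reason worth stating explicitly: when $|q|<1$ the $q$-binomial coefficients are \emph{uniformly bounded}, $\binom{m}{n}_q\leq(q;q)_\infty^{-2}$, unlike their classical counterparts, so the rearranged double sum is dominated by $C\sum_m|a_m|(m+1)\rho^m$ with $\rho$ controlled by $|x|$ and $|a|$. Two blemishes. First, your bound $|x-q^ka|\leq|x-a|+|a|(1-|q|^k)$ has a sign slip: in general $|1-q^k|\leq 1+|q|^k$, and $1-|q|^k$ is a \emph{lower} bound for $|1-q^k|$; what you wrote is only legitimate for real $0<q<1$. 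Second, the domination only certifies the identity on a region where $\rho$ stays strictly inside the disc of convergence, which with your crude estimate on $|(x-a)_q^n|$ can fail to contain any neighborhood of $a$ when $|a|$ exceeds half the radius (one should exploit that the first factor of $(x-a)_q^n$ is $x-a$ itself, or restrict to real $q$). Since the theorem as stated is silent about its region of validity this is forgivable, but your write-up should say precisely which neighborhood of $a$ the argument actually delivers.
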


\begin{definition}
\bigskip We define $H_{q}$ and $D_{h}$ operators as follow%

\begin{align}
H_{q}^{n}(x)  &  =\frac{h}{1-q}\left(  \frac{h}{1-q}+x\right)  \left(
\frac{h}{1-q}+\left[  2\right]  _{q}x\right)  ...\left(  \frac{h}{1-q}+\left[
n-1\right]  _{q}x\right)  \left(  1-q\right)  ^{n}\\
&  =\left(  x+h\right)  ^{n}\left(  \frac{x}{x+h};n\right)  _{q}=\sum
_{k=0}^{n}\left(
\begin{array}
[c]{c}%
n\\
k
\end{array}
\right)  _{q}q^{\frac{k(k-1)}{2}}\left(  x+h\right)  ^{n-k}\left(  -x\right)
^{k}\ \\
D_{h}(F(x))  &  =\frac{F(x+h)-F(x)}{h}=f(x)
\end{align}

In this definition we assume that $n\in%
\mathbb{N}
$ and the functions and values are well-defined $\left(  h\neq0,q\neq1\right)
.$ The fist equality is hold because of Heine's binomial formula.
\end{definition}

For a long time, mathematician have worked on the area of operators and they
solved several types of differential equations by using shifted-operators. The
$H-$operators rules like a bridge between the ordinary expansions and
$q$-analogue of these expansions. We may rewrite several formulae of these
area to the form of $q-$calculus such as [23, 24, 25, 26, 27]. Actually we may
write $q-$expansion of these functions. In a letter, Bernoulli concerned the
importence of this expansion for Leibnitz by these words" Nothing is more
elegent than the agreement, which you have observed between the numerical
power of the binomial and differetial expansions, there is no doubt that
something is hidden there" \cite{davis}

\begin{theorem}
\bigskip(Fundemental Theorem of $h$-Calculus) If $F(x)$ is an $h$-derivative
of $f(x)$ and $b-a\in h%
\mathbb{Z}
$, we have\cite{kac}%

\begin{equation}
\int f(x)d_{h}x=F(b)-F(a)
\end{equation}

where we define $h$-integral as follows%

\begin{equation}
\int f(x)d_{h}x=\left\{
\begin{array}
[c]{c}%
h\left(  f(a)+f(a+h)+...+f(b-h)\right)  \ \ \ \ \text{if }a<b\\
0\text{
\ \ \ \ \ \ \ \ \ \ \ \ \ \ \ \ \ \ \ \ \ \ \ \ \ \ \ \ \ \ \ \ \ \ \ \ \ \ \ \ \ \ \ \ \ \ \ \ \ \ if
}a=b\\
-h\left(  f(b)+f(b+h)+...+f(a-h)\right)  \ \text{ \ if }a>b
\end{array}
\right.  \
\end{equation}

\end{theorem}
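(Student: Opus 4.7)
The plan is a straightforward telescoping argument using the definition of the $h$-integral together with the defining identity of the $h$-derivative. First I would dispose of the trivial case $a=b$, where the definition gives $0$ on the left and $F(b)-F(a)=0$ on the right, and I would note that the case $a>b$ reduces to the case $a<b$ by a sign flip built into the definition of $\int f\, d_h x$. So I would focus on the case $a<b$.

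Next, since $b-a\in h\mathbb{Z}$ and $a<b$, I would write $b=a+Nh$ for some positive integer $N$. Unwinding the definition gives
\begin{equation*}
\int_a^b f(x)\, d_h x = h\sum_{k=0}^{N-1} f(a+kh).
\end{equation*}
The hypothesis $D_h F = f$ means $f(x)=\frac{F(x+h)-F(x)}{h}$, so that $h\, f(a+kh)=F(a+(k+1)h)-F(a+kh)$ for each $k$. Substituting, the sum becomes
\begin{equation*}
\sum_{k=0}^{N-1}\bigl[F(a+(k+1)h)-F(a+kh)\bigr],
\end{equation*}
which telescopes to $F(a+Nh)-F(a)=F(b)-F(a)$, as required.

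There is no serious obstacle here: the only thing one must check carefully is that the index bounds in the Riemann-like sum $h(f(a)+f(a+h)+\cdots+f(b-h))$ match the telescoping range $k=0,\ldots,N-1$, which they do precisely because the last sample point is $b-h=a+(N-1)h$. The case $a>b$ is then handled by the same telescoping run backward, where the extra minus sign in the definition of the $h$-integral absorbs the orientation reversal and again yields $F(b)-F(a)$.
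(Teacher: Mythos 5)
Your telescoping argument is correct and complete: the case split matches the definition of the $h$-integral, the identity $h\,f(a+kh)=F(a+(k+1)h)-F(a+kh)$ follows directly from $D_hF=f$, and the index bookkeeping (last sample point $b-h=a+(N-1)h$) is handled properly. The paper itself gives no proof of this theorem --- it simply cites the Kac--Cheung book --- and your argument is exactly the standard one found there, so there is nothing to compare against beyond noting that you have supplied the omitted details. The only implicit assumption, shared with the paper's own definition of the $h$-integral, is that $h>0$ so that $b=a+Nh$ with $N$ a positive integer when $a<b$; it would cost you one sentence to say so explicitly.
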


Now in the aid of $q-$Taylor expansion, we may write $F(x+h)$ as follows;%

\begin{equation}
F(x+h)=\sum_{j=0}^{\infty}\frac{\left(  (x+h)-x\right)  _{q}^{j}}{\left[
j\right]  _{q}!}\left(  D_{q}^{j}F\right)  (x)\ =\sum_{j=0}^{\infty}%
\frac{D_{q}^{j}H_{q}^{j}}{\left[  j\right]  _{q}!}\left(  F(x)\right)
=e_{q}\left(  D_{q}H_{q}\right)  \left(  F(x)\right)  \
\end{equation}

Here, $e_{q}$ is $q-$shifted operator and can be expressed as expansion of
$D_{q}$ and $H_{q}.$ We may assume that $D_{h}\left(  F(x)\right)  =f(x)$ and then%

\begin{equation}
f(x)=\frac{F(x+h)-F(x)}{h}=\frac{e_{q}\left(  D_{q}H_{q}\right)  -1}{H_{q}%
}F(x)
\end{equation}

\bigskip Since the Jackson integral is $q-$antiderivative, we have%

\begin{equation}
\frac{H_{q}D_{q}}{e_{q}\left(  D_{q}H_{q}\right)  -1}\int f(x)d_{q}x=F(x)
\end{equation}

And in the aid of (2), the left hand side of the equation can be expressed as
a $q$-Bernoulli numbers, therefore%

\begin{align}
F(x)  &  =\sum_{n=0}^{\infty}\mathit{\beta}_{n,q}\frac{\left(  H_{q}%
D_{q}\right)  ^{n}}{\left[  n\right]  _{q}!}\int f(x)d_{q}x\\
&  =\int f(x)d_{q}x-\frac{h}{\left[  2\right]  _{q}}f(x)+\sum_{n=1}^{\infty
}\frac{\mathit{\beta}_{n,q}}{\left[  n\right]  _{q}!}\left(  D_{q}%
^{n-1}f(x)\right)  H_{q}^{n}(x)\\
&  =\int f(x)d_{q}x-\frac{h}{\left[  2\right]  _{q}}f(x)+\sum_{n=1}^{\infty
}\left(  \sum_{k=0}^{n}\left(
\begin{array}
[c]{c}%
n\\
k
\end{array}
\right)  _{q}q^{\frac{k(k-1)}{2}}\left(  h+x\right)  ^{n-k}\left(  -x\right)
^{k}\right)  \frac{\left(  D_{q}^{n-1}f(x)\right)  \mathit{\beta}_{n,q}%
}{\left[  n\right]  _{q}!}%
\end{align}

Thus we can state the following $q-$analogue of Euler-Maclaurian formula

\begin{theorem}
If the function $f(x)$ is capable of expansion as a convergent power series
and $f(x)$ decrease so rapidly with $x$ such that all normal derivatives
approach zero as $x\rightarrow\infty$, then we can express the series of
function $f(x)$ as follow%

\begin{align}
\sum_{n=a}^{\infty}f(n)  &  =\int_{a}^{\infty}f(x)d_{q}x+\frac{1}{\left[
2\right]  _{q}}\left(  f(a)\right)  -\\
&  \lim_{b\rightarrow\infty}\sum_{n=1}^{\infty}\left(  \sum_{k=0}^{n}\left(
\begin{array}
[c]{c}%
n\\
k
\end{array}
\right)  _{q}q^{\frac{k(k-1)}{2}}\left(  \left(  1+b\right)  ^{n-k}\left(
-b\right)  ^{k}-\left(  1+a\right)  ^{n-k}\left(  -a\right)  ^{k}\right)
\right)  \frac{h^{n}\left(  D_{q}^{n-1}f(a)\right)  \mathit{\beta}_{n,q}%
}{\left[  n\right]  _{q}!}%
\end{align}

\end{theorem}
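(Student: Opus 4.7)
The starting point is the fundamental theorem of $h$-calculus (stated just above) applied with step $h=1$. If $F$ is an $h$-antiderivative of $f$, i.e.\ $D_h F = f$, then for integers $a < b$
$$\sum_{n=a}^{b-1} f(n) = \int_{a}^{b} f(x)\,d_h x = F(b) - F(a).$$
Adding $f(b)$ and letting $b\to\infty$ converts the left side into $\sum_{n=a}^{\infty} f(n)$ (once $f(b) \to 0$), so the theorem will follow by substituting the operator expansion for $F$ derived in the paragraph preceding the statement into both $F(b)$ and $F(a)$ and then passing to the limit.

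Next I would use that expansion,
$$F(x) = \int f(x)\,d_q x - \frac{h}{[2]_q} f(x) + \sum_{n=1}^{\infty} \frac{\beta_{n,q}}{[n]_q!}\,H_q^n(x)\,D_q^{n-1}f(x),$$
together with $H_q^n(x) = \sum_{k=0}^n \binom{n}{k}_q q^{k(k-1)/2}(h+x)^{n-k}(-x)^k$ from the definition of $H_q$. Taking the difference $F(b) - F(a)$, the $q$-antiderivative terms combine into $\int_a^b f(x)\,d_q x$, the second term produces $-\frac{h}{[2]_q}(f(b)-f(a))$, and the series contributes
$$\sum_{n=1}^{\infty} \frac{\beta_{n,q}}{[n]_q!}\bigl[H_q^n(b)\,D_q^{n-1}f(b) - H_q^n(a)\,D_q^{n-1}f(a)\bigr].$$
Matching the three pieces to the right-hand side of the claimed formula (with $h=1$) is just bookkeeping on the binomial expansion of $H_q^n$.

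Finally I would pass $b\to\infty$. The hypothesis that $f$ decays so rapidly that all its normal derivatives vanish at infinity forces $f(b)\to 0$ and, provided this decay dominates the polynomial growth of $H_q^n(b)$, forces each summand $H_q^n(b)\,D_q^{n-1}f(b)$ to vanish as well; the $q$-integral becomes $\int_a^\infty f(x)\,d_q x$. Keeping the $b\to\infty$ limit \emph{outside} the whole series, as the statement does, has the advantage of absorbing this domination into a single outer limit rather than requiring it termwise. The main obstacle is precisely this interchange of limit and infinite sum: because the factors $\beta_{n,q}/[n]_q!$ and $H_q^n(x)$ can both grow rapidly in $n$, a rigorous termwise passage needs a Tannery-type (dominated-convergence for series) argument driven by the convergence of the power-series representation of $f$ assumed in the hypothesis, together with an explicit tail estimate on $D_q^{n-1}f$. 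Verifying this uniform control---and not the operator manipulation, which is essentially already carried out in the paragraph preceding the theorem---is the delicate technical point.
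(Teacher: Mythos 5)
Your plan follows the paper's own proof essentially step for step: apply the fundamental theorem of $h$-calculus with $h=1$ to get $\sum_{n=a}^{b-1}f(n)=F(b)-F(a)$, substitute the operator expansion of $F$ derived just before the theorem, and let $b\to\infty$ using the decay hypothesis to kill the terms at $b$. The one point where you go beyond the paper is in flagging the limit--sum interchange (the Tannery-type domination of $H_q^n(b)\,D_q^{n-1}f(b)\,\beta_{n,q}/[n]_q!$ uniformly in $n$), which the paper handles only by the informal remark that the $q$-derivatives vanish at infinity because the ordinary ones do.
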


\begin{proof}
In the aid of theorem (5) and (18), If we suppose that $h=1$ and $b-a\in%
\mathbb{N}
,$ we have%

\begin{align}
\sum_{n=a}^{b-1}f(n)  &  =\int\limits_{a}^{b}f(x)d_{1}x=F(b)-F(a)\\
&  =\int_{a}^{b}f(x)d_{q}x-\frac{1}{\left[  2\right]  _{q}}\left(
f(b)-f(a)\right) \\
&  +\sum_{n=1}^{\infty}\left(  \sum_{k=0}^{n}\left(
\begin{array}
[c]{c}%
n\\
k
\end{array}
\right)  _{q}q^{\frac{k(k-1)}{2}}\left(  \left(  1+b\right)  ^{n-k}\left(
-b\right)  ^{k}-\left(  1+a\right)  ^{n-k}\left(  -a\right)  ^{k}\right)
\right)  \frac{\left(  D_{q}^{n-1}f(b)-D_{q}^{n-1}f(a)\right)  \mathit{\beta
}_{n,q}}{\left[  n\right]  _{q}!}%
\end{align}

Let $f(x)$ decrease so rapidly with $x$ then $D_{q}f(x)$ can be estimated by
$\frac{-f(x)}{x(q-1)},$ tend $x$ to infinity and use L'Hopital to reach
$\frac{-f^{\prime}(x)}{(q-1)}.$Now If normal derivatives of $f(x)$ is tend to
zero, then $q-$derivatives is also tending zero. The same discussion make
$D_{q}^{n-1}f(b)=0$ for big enough $b.$ Therefore%

\begin{align}
\sum_{n=a}^{\infty}f(n)  &  =\int_{a}^{\infty}f(x)d_{q}x+\frac{1}{\left[
2\right]  _{q}}\left(  f(a)\right)  -\\
&  \lim_{b\rightarrow\infty}\sum_{n=1}^{\infty}\left(  \sum_{k=0}^{n}\left(
\begin{array}
[c]{c}%
n\\
k
\end{array}
\right)  _{q}q^{\frac{k(k-1)}{2}}\left(  \left(  1+b\right)  ^{n-k}\left(
-b\right)  ^{k}-\left(  1+a\right)  ^{n-k}\left(  -a\right)  ^{k}\right)
\right)  \frac{h^{n}\left(  D_{q}^{n-1}f(a)\right)  \mathit{\beta}_{n,q}%
}{\left[  n\right]  _{q}!}%
\end{align}

\end{proof}

\begin{example}
Let $f(x)=x^{s}$ where $s$ is a positive integer, then we can apply this
function at (22) where $a=0$ and $b-1\in%
\mathbb{N}
$%

\begin{align}
\sum_{n=0}^{b-1}n^{s}  &  =\frac{b^{s+1}}{\left[  s+1\right]  _{q}}-\left(
\frac{b^{s}}{\left[  2\right]  _{q}}\right) \\
&  +\sum_{n=2}^{s+1}\left(  \sum_{k=0}^{n}\left(
\begin{array}
[c]{c}%
n\\
k
\end{array}
\right)  _{q}q^{\frac{k(k-1)}{2}}\left(  \left(  1+b\right)  ^{n-k}\left(
-b\right)  ^{k}\right)  \right)  \left(
\begin{array}
[c]{c}%
s\\
n-1
\end{array}
\right)  _{q}\frac{b^{s-n+1}\mathit{\beta}_{n,q}}{\left[  n\right]  _{q}}%
\end{align}

This relation shows the sum of power in the combination of $q-$Bernoulli
numbers. when $q\rightarrow1$ from the right side, we have an ordinary form of
this relation. For the another forms of sum of power, see \cite{chan}
\end{example}

\begin{example}
Let $f(x)=e_{q}^{-x}=\frac{1}{E_{q}^{x}}$ then this function decreases so
rapidly with $x$ such that all normal derivatives approach zero as
$x\rightarrow\infty.$ For comfirming this, let us mention that $E_{q}^{x},$
for some fixed $0<|q|<1$ and $|-x|<\frac{1}{|1-q|}$ is increasing rapidly,
since $\frac{d}{dx}\left(  E_{q}^{x}\right)  =\sum\limits_{j=1}^{\infty
}(1-q)q^{j}\prod\limits_{\substack{k=0\\k\neq j}}^{\infty}\left(
1+(1-q)q^{k}x\right)  >0.$%

\begin{equation}
\sum_{n=0}^{\infty}e_{q}^{-n}=1+\frac{1}{\left[  2\right]  _{q}}%
-\lim_{b\rightarrow\infty}\sum_{n=1}^{\infty}\left(  \sum_{k=0}^{n}\left(
\begin{array}
[c]{c}%
n\\
k
\end{array}
\right)  _{q}q^{\frac{k(k-1)}{2}}\left(  \left(  1+b\right)  ^{n-k}\left(
-b\right)  ^{k}\right)  \right)  \frac{\mathit{\beta}_{n,q}}{\left[  n\right]
_{q}!}%
\end{equation}

Moreover, this is $q$-analogue of $\sum_{n=0}^{\infty}e^{-n}=\frac{-1}%
{e^{-1}-1}=\frac{3}{2}+\sum_{n=1}^{\infty}\frac{\mathit{\beta}_{2n}}{\left(
2n\right)  !},$ where $\mathit{\beta}_{2n}$ is a normal Bernoulli numbers that
is generated by $\frac{-1}{e^{-1}-1}.$
\end{example}

\section{approximation by q-Bernoulli polynomial}

We know that the class of $q$-Bernoulli polynomials are not in a form of
orthogonal polynomials. In addition, we may apply Gram-Schmit algorithm to
make these polynomials orthonormal then, according to Theorem 8.11
\cite{rudin} we have the best approximation in the form of Fourier series.
Now, Instead of using that algorithm, we apply properties of lemma (1) to
achieve an approximation. Let $H=L_{q}^{2}\left[  0,1\right]  =\left\{
f_{q}:\left[  0,1\right]  \rightarrow\left[  0,1\right]  |\int\limits_{0}%
^{1}\left\vert f_{q}^{2}(t)\right\vert d_{q}t<\infty\right\}  $ be the Hilbert
Space\cite{mansour}, then $\mathit{\beta}_{n,q}(x)\in H$ for $n=0,...,N$ and
$Y=Span\left\{  \mathit{\beta}_{0,q}(x),\mathit{\beta}_{1,q}%
(x),...,\mathit{\beta}_{N,q}(x)\right\}  $ is finite dimensional vector
subspace of $H$. Unique best approximation for any arbitrary elements of $H$
like $h,$ is $\widehat{h}\in Y$ such that for any $y\in Y$ the inequality
$\left\Vert h-\widehat{h}\right\Vert _{2,q}\leq$ $\left\Vert h-y\right\Vert
_{2,q},$ where the norm is defined by $\left\Vert f\right\Vert _{2,q}:=\left(
\int\limits_{0}^{1}\left\vert f_{q}^{2}(t)\right\vert d_{q}t\right)
^{\frac{1}{2}}.$ Following proposition determine the coefficient of
$q-$Bernoulli polynomials, when we estimate any $f\in L_{q}^{2}\left[
0,1\right]  $ by truncated $q-$Bernoulli series. In fact, in order to see how
well a certain partial sum approximates the actual value, we would like to
drive a formula similar to (19), but with the infinite sum on the right hand
side replaced by the $N$th partial sum $S_{N\text{,}}$ plus an additional term
$R_{N.}$

\bigskip Sippose $a\in%
\mathbb{Z}
$ and $b=a+1.$Consider the N$^{th}$ partial sum%

\[
S_{N}=\sum_{k=0}^{N}\frac{\mathit{\beta}_{k,q}}{\left[  k\right]  _{q}%
!}\left(  H_{q}^{k}D_{q}^{k}\left(  f\right)  (a+1)-H_{q}^{k}D_{q}^{k}\left(
f\right)  (a)\right)
\]

Let $g(x,y)=\frac{\mathit{\beta}_{N,q}(x,y)}{\left[  N\right]  _{q}!},$ then
in the aid of lemma (1) we have $D_{q}^{n-k}g(x,y)=\frac{\mathit{\beta}%
_{k,q}(x,y)}{\left[  k\right]  _{q}!}$ and we have

\begin{align*}
S_{N}  &  =\sum_{k=0}^{N}\left(  \frac{\mathit{\beta}_{k,q}(0,0)}{\left[
k\right]  _{q}!}H_{q}^{k}D_{q}^{k}\left(  f\right)  (a+1)-\frac{\mathit{\beta
}_{k,q}(0,1)}{\left[  k\right]  _{q}!}H_{q}^{k}D_{q}^{k}\left(  f\right)
(a)\right)  -D_{q}\left(  f\right)  (a)\\
&  =\sum_{k=0}^{N}\left(  D_{q}^{N-k}g(0,0)H_{q}^{k}D_{q}^{k}\left(  f\right)
(a+1)-D_{q}^{N-k}g(0,1)H_{q}^{k}D_{q}^{k}\left(  f\right)  (a)\right)
-D_{q}\left(  f\right)  (a)\\
&  =D_{q}\left(  f\right)  (a)-\sum_{k=0}^{N}D_{q}^{N-k}g(0,x)H_{q}^{k}%
D_{q}^{k}\left(  f\right)  (a+1-x)|_{x=0}^{x=1}%
\end{align*}

By taking $q-$derivative of the summation, we may rewrite this partial sum by
integral presentation, Moreover this relation give a boundary for $R_{N}(x).$

\begin{proposition}
\bigskip Let $f\in L_{q}^{2}\left[  0,1\right]  $ and be estimated by
truncated $q-$Bernoulli series $\sum_{n=0}^{N}C_{n}\mathit{\beta}_{n,q}(x).$
Then $C_{n}$ coefficients for $n=0,1,...,N$ can be calculated as an integral
form $C_{n}=\frac{1}{\left[  n\right]  _{q}!}\int\limits_{0}^{1}D_{q}%
^{(n)}f(x)d_{q}x$. In addition we can write $f(x)$ in the following form%

\[
f(x)=\int_{0}^{1}f(x)d_{q}x-\frac{h}{\left[  2\right]  _{q}}f(x)+\sum
_{n=1}^{N}\left(  \frac{1}{\left[  n\right]  _{q}!}\int\limits_{0}^{1}%
D_{q}^{(n)}f(x)d_{q}x\right)  \mathit{\beta}_{n,q}(x)+R_{q,n}(x)
\]

Moreover $R_{q,n}(x)$ as a reminder part is bounded by $\frac{2^{n}}{\left[
N\right]  _{q}!}Sup_{x\in\left[  0,1\right]  }|\mathit{\beta}_{n,q}%
(x)|Sup_{x\in\left[  0,1\right]  }|D_{q}^{(n)}f(x)|$

\begin{proof}
In fact we approximate $f(x)$ as a linear combinations of $q-$Bernoulli
polynomials and we assume that $f(x)\simeq\sum_{n=0}^{N}C_{n}\mathit{\beta
}_{n,q}(x),$ so take the Jackson integral from both sides lead us to%

\[
\int_{0}^{1}f(x)d_{q}x\simeq\sum_{n=0}^{N}C_{n}\int_{0}^{1}\mathit{\beta
}_{n,q}(x)d_{q}x=C_{0}\int_{0}^{1}\mathit{\beta}_{0,q}(x)d_{q}x+C_{1}\int
_{0}^{1}\mathit{\beta}_{1,q}(x)d_{q}x+...+C_{N}\int_{0}^{1}\mathit{\beta
}_{N,q}(x)d_{q}x
\]

In the aid of Lemma 1 part (d) all the terms at the right side except the
first one has to be zero. we calculate $\mathit{\beta}_{0,q}(x)=1,$ so
$C_{0}=\frac{1}{\left[  0\right]  _{q}!}\int\limits_{0}^{1}f(x)d_{q}x.$Using
part (a) of that lemma for $q-$derivative of $q-$Bernoulli polynomial gather
by taking $q$-derivative of $f(x)$ leads to
\[
\int_{0}^{1}D_{q}\left(  f(x)\right)  d_{q}x\simeq\sum_{n=0}^{N}C_{n}\left[
n\right]  _{q}!\int_{0}^{1}\mathit{\beta}_{n-1,q}(x)d_{q}x=C_{1}\int_{0}%
^{1}\mathit{\beta}_{0,q}(x)d_{q}x=C_{1}%
\]

Repeating this procedure $n$-times, yields the form of $C_{n}$ as we mention
it at theorem. We mention that if $x\in\left[  0,1\right]  $ and $0<|q|<1$,
then $H_{q}^{n}(x)$ for $h=1$ is bounded by $2^{n}.$In addition, the reminder
part can be presented by integral forms and this boundary can be found easily.
\end{proof}
\end{proposition}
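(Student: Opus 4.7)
The plan is to determine the coefficients $C_n$ by a ``differentiate-then-integrate'' technique that exploits the two identities from Lemma 1: the $q$-derivative rule $D_q(\beta_{n,q}(t)) = [n]_q \beta_{n-1,q}(t)$ and the zero-mean property $\int_0^1 \beta_{n,q}(t) d_q t = 0$ for $n \geq 1$. Together these behave as a surrogate for orthogonality and essentially force each coefficient to be isolated one at a time.

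To extract $C_0$, I would start from the ansatz $f(x) \simeq \sum_{n=0}^N C_n \beta_{n,q}(x)$ and apply the Jackson integral on $[0,1]$ to both sides. Only the $n=0$ term survives by Lemma 1(d), and since $\beta_{0,q} \equiv 1$, this yields $C_0 = \int_0^1 f(x) d_q x$. Next I would apply $D_q$ to the approximation before integrating; Lemma 1(a) shifts the index down so that $D_q f(x) \simeq \sum_{n=1}^N C_n [n]_q \beta_{n-1,q}(x)$, and integrating again isolates $C_1 = \int_0^1 D_q f(x) d_q x$. Iterating this operation $k$ times absorbs the falling product $[n]_q [n-1]_q \cdots [n-k+1]_q$ into $[k]_q!$ in the denominator and gives $C_k = \frac{1}{[k]_q!} \int_0^1 D_q^{(k)} f(x) d_q x$. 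Substituting these back produces the main identity of the proposition.

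For the remainder bound, I would build on the integral representation of $S_N$ already assembled from the two-variable polynomial $g(x,y) = \beta_{N,q}(x,y)/[N]_q!$ just before the statement. Writing $f = S_N + R_{q,N}$ and reading off the tail, the residue is a Jackson integral of a product involving $\beta_{N,q}$, a factor of type $H_q^k$, and $D_q^{(N)} f$. The explicit expression for $H_q^n(x)$ in Definition 4, combined with the triangle inequality applied to each of the $n$ factors, gives $|H_q^n(x)| \leq 2^n$ when $h=1$ and $x \in [0,1]$. Pulling the two suprema $\sup |\beta_{n,q}|$ and $\sup |D_q^{(n)} f|$ outside the integral over the unit interval, and keeping the normalising $[N]_q!$ from $g$, yields the stated bound $\frac{2^n}{[N]_q!}\sup_{[0,1]}|\beta_{n,q}(x)|\sup_{[0,1]}|D_q^{(n)} f(x)|$.

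The main obstacle is the last step: the coefficient computation is essentially algebraic once Lemma 1 is in hand, and the estimate on $H_q^n$ is routine, but turning the formal approximation into a genuine equation with controllable remainder requires justifying that the integral form of $S_N$ derived from $g(x,y)$ telescopes down to an expression in which only $D_q^{(N)} f$ and $\beta_{N,q}$ appear in the residue. Once this reduction is secure, the supremum bounds apply termwise and the proposition follows.
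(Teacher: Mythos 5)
Your proposal follows essentially the same route as the paper's own proof: the coefficients $C_n$ are extracted by the same iterate of ``apply $D_q$, then integrate over $[0,1]$'' using Lemma 1(a) and (d), and the remainder is bounded by the same estimate $|H_q^n(x)|\leq 2^n$ for $h=1$ on $[0,1]$ together with the two suprema pulled out of the integral representation built from $g(x,y)=\beta_{N,q}(x,y)/[N]_q!$. You are, if anything, more candid than the paper about the one genuinely unproved step (justifying that the residue reduces to a term involving only $D_q^{(N)}f$ and $\beta_{N,q}$), which the paper dismisses with ``this boundary can be found easily.''
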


\subsection{Further works}

In this paper, we introduced a proper tool to approximate a given capable
function by combinations of q-Bernoulli polynomials. In spite of a lot of
investigations on q-Bernoulli polynomials, study of approximation properties
of the q-Bernoulli polynomials are not studied. We may apply these results to
solve q-difference equation, $q$-analogue of the things that is done in
\cite{tohid 1} or \cite{tohid 2}. The techniques of H-operator can be applied
in operator theory.

\section{\bigskip conflict of interest}

The authors declare that there is no conflict of interest regarding the
publication of this paper

\end{document}